\newif\ifdraft
\newif\ifarxiv
\arxivtrue 

\ifarxiv
\documentclass[12pt]{amsart}
\usepackage[margin=1.3in,marginparwidth=1in]{geometry}
\else
\documentclass{proc-l}
\fi

\usepackage{enumitem}
\usepackage{verbatim}
\usepackage{xcolor}
\usepackage{graphicx}
\usepackage[normalem]{ulem}
\usepackage{tikz}
\usetikzlibrary{intersections,positioning,calc}
\usepackage{hyperref}

\ifdraft
\usepackage{fancyhdr,datetime2}
\fancyhf{}
\fancyhead[RO,LE]{\thepage}
\fancyhead[CE]{\leftmark} 
\fancyhead[CO]{\rightmark} 
\fancyfoot[C]{\\ \color{red}\fbox{\textbf{DRAFT} (\DTMnow)}} 

\fancypagestyle{firstpage}{%
	
	\fancyhf{}
	\fancyhead[C]{\color{red}\fbox{\textbf{DRAFT} (\DTMnow) -- NOT FOR DISTRIBUTION}}
	\fancyfoot[C]{\thepage}
}
\fi

\numberwithin{equation}{section}

\usepackage{mymacros}

\begin{document}

\title{Gibbs measures have local product structure}
\author{Vaughn Climenhaga}
\address{Dept.\ of Mathematics, University of Houston, Houston, TX 77204}
\email{climenha@math.uh.edu}
\date{\today}
\thanks{This material is based upon work supported by the National Science Foundation under Award No.\ DMS-1554794 and DMS-2154378.}
\subjclass{Primary: 37D35. Secondary: 37D20, 37C40}
\keywords{Thermodynamic formalism; Gibbs measures; equilibrium measures}
\begin{abstract}
It is well-known that equilibrium measures for uniformly hyperbolic dynamical systems have a local product structure, which plays an important role in their mixing properties. Existing proofs of this fact rely either on transfer operators or on leafwise constructions, and in particular are not well-suited to the approach to thermodynamic formalism based on Bowen's specification property. Here we provide an alternate proof based on the Gibbs property, which fits more comfortably in that approach.
\end{abstract}
\maketitle
\ifdraft
\thispagestyle{firstpage}
\pagestyle{fancy}
\fi

\section{Introduction and main results}

Let $M$ be a compact smooth Riemannian manifold and $f\colon M\to M$ a $C^1$ diffeomorphism that is Anosov, meaning that the tangent bundle splits as $TM = E^u \oplus E^s$, where $E^s$ and $E^u$ are invariant subbundles such that for some $\ell\in\NN$, $\|Df^\ell|_{E^s}\| < 1$ and $\|Df^{-\ell}|_{E^u}\| < 1$.

The theory of thermodynamic formalism developed in the 1970s considers a \emph{potential function} $\ph\colon M\to \RR$ and studies \emph{equilibrium measures} maximizing the quantity $h_\mu(f) + \int \ph\,d\mu$ over the space of all $f$-invariant Borel probability measures on $M$. (Here $h_\mu(f)$ is Kolmogorov--Sinai measure-theoretic entropy.) If $\ph$ is H\"older continuous and $f$ is topologically transitive, meaning that it has a dense orbit, then there is a unique equilibrium measure $\mu = \mu_\ph$; the class of such measures includes important special cases such as the measure of maximal entropy (MME) and (when $f$ is $C^{1+\alpha}$) the Sinai--Ruelle--Bowen (SRB) measure.

In this uniformly hyperbolic setting, an important property of equilibrium measures is their \emph{local product structure}, which reflects the product structure of the system itself: given $x\in M$, the local stable and unstable leaves through $x$ introduce a coordinate system on a neighborhood $R$ of $x$, in which $\mu|_R$ is absolutely continuous with respect to a product measure (see Definition \ref{def:lps} below). This plays an important role in the strong stochastic properties satisfied by equilibrium measures, such as ergodicity, mixing, K, Bernoulli, exponential decay of correlations, central limit theorem, etc.; see for example \cite{CHT16, nA22, CP23}. 

Local product structure for the MME was shown by Margulis \cite{gM70}, and for more general equilibrium measures the classical construction via eigenfunctions and eigenmeasures of the Ruelle--Perron--Frobenius operator \cite{Bow08} can be understood as a kind of product structure. A full description of the product structure was given in \cite{Hay94,Lep00}, again using the RPF operator. For geodesic flows in negative curvature, the Patterson--Sullivan construction of equilibrium measures \cite{vK90} produces a product structure. One can also describe the product structure in terms of a dimension-theoretic leafwise construction: see \cite{uH89,bH89} for the MME case, and \cite{CPZ19,CPZ20,Cli24,CD24} for the general case.

Another important approach to equilibrium measures is due to Rufus Bowen.
For Anosov systems, H\"older continuous potentials satisfy a bounded distortion condition now called the \emph{Bowen property}: there are $r,L>0$ such that
\begin{equation}\label{eqn:intro-Bow}
\text{for all $x\in M$, $n\in \NN$, and $y\in B_n(x,r)$, we have $|S_n\ph(x) - S_n\ph(y)| \leq L$}.
\end{equation}
Here $S_n\ph(x) := \sum_{k=0}^{n-1} \ph(f^k x)$ is the $n$th ergodic sum of $\ph$ at $x$, and
\[
\text{$B_n(x,r) := \{y\in M : d(f^k x, f^k y) < r$ for all $0\leq k < n\}$}
\]
is the radius $r$ \emph{Bowen ball} around $x$ of order $n$.
Transitive Anosov systems have the \emph{specification} and \emph{expansivity} properties \cite{rB71}; together with continuity and the Bowen property, these imply existence of a unique equilibrium measure \cite{Bow75}.
However, the author is not aware of any results in the literature that use these properties to establish local product structure. In particular, more recent generalizations of this approach to the non-uniformly hyperbolic setting \cite{CT16,BCFT18,CT21,PYY} have been silent on the question of local product structure of the equilibrium measures they produce.

The purpose of this paper is to provide a direct proof of local product structure using the Gibbs property of equilibrium measures, which \emph{can} be established directly using the specification, expansivity, and Bowen properties \cite{Bow75}, sidestepping the use of transfer operators or leafwise constructions. One important motivation is the expectation that this argument will prove fruitful in future adaptations to non-uniform hyperbolicity; see \cite{CCESW} for recent progress in this direction.

Our main result (Theorem \ref{thm:main} below) immediately implies the following.

\begin{theorem}\label{thm:Holder}
If $f\colon M\to M$ is a topologically transitive $C^1$ Anosov diffeomorphism and $\ph \colon M\to \RR$ is continuous and satisfies the Bowen property \eqref{eqn:intro-Bow}, then the unique equilibrium measure has local product structure with uniformly bounded densities in the sense of Definition \ref{def:lps}.
\end{theorem}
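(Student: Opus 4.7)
The plan is to work directly with the Gibbs property of $\mu$: for some $K \geq 1$ and $P \in \RR$ (the topological pressure of $\ph$), we have $K^{-1} \leq \mu(B_n(y,r))/\exp(-nP + S_n\ph(y)) \leq K$ for all $y \in M$ and $n \geq 0$, which can be deduced from specification, expansivity, and the Bowen property as in \cite{Bow75}. Fix $x \in M$ and a small $r>0$, and form the rectangle $R = [W^u_\mathrm{loc}(x), W^s_\mathrm{loc}(x)]$, parametrized by the bracket $(u,s) \mapsto [u,s] := W^s_\mathrm{loc}(u) \cap W^u_\mathrm{loc}(s)$. Let $\pi^u_x, \pi^s_x$ denote the two projections and define candidate leafwise measures as the marginals $\mu^u_x := (\pi^u_x)_\ast(\mu|_R)$ and $\mu^s_x := (\pi^s_x)_\ast(\mu|_R)$. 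The goal is to show that $\mu|_R$ is absolutely continuous with respect to $\mu^u_x \otimes \mu^s_x$ (transported to $R$ via the bracket) with Radon--Nikodym derivative bounded uniformly away from $0$ and $\infty$ by constants independent of $x$.

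The key geometric fact from uniform hyperbolicity is that for $y \in R$ and $n,m \geq 0$, the two-sided Bowen ball $B_{n,m}(y,r) := \{z : d(f^k z, f^k y) < r, -m \leq k < n\}$ is comparable (modulo a bounded enlargement factor, and after slightly shrinking $r$) to the product rectangle $[B^u_n(y,r), B^s_m(y,r)]$, whose factors are pieces of $W^u_\mathrm{loc}(y), W^s_\mathrm{loc}(y)$ of diameter on the order of $r\lambda^{-n}, r\lambda^{-m}$ respectively. Applying the Gibbs property with basepoint $f^{-m}y$ and time horizon $n+m$, and decomposing $S_{n+m}\ph(f^{-m}y) = S_m\ph(f^{-m}y) + S_n\ph(y)$, yields
\[
\mu\bigl([B^u_n(y,r), B^s_m(y,r)]\bigr) \asymp \exp(-nP + S_n\ph(y)) \cdot \exp(-mP + S_m\ph(f^{-m}y)).
\]
Meanwhile, the one-sided forward Bowen ball $B_n(y,r)$ is approximately the vertical strip $(\pi^u_x)^{-1}(B^u_n(y,r)) \subset R$ (full stable width on the order of $r$, unstable width on the order of $r\lambda^{-n}$), so
\[
\mu^u_x(B^u_n(y,r)) \asymp \mu(B_n(y,r)) \asymp \exp(-nP + S_n\ph(y)),
\]
and a dual estimate gives $\mu^s_x(B^s_m(y,r)) \asymp \exp(-mP + S_m\ph(f^{-m}y))$.

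Combining the three estimates yields $\mu([B^u_n(y,r), B^s_m(y,r)]) \asymp \mu^u_x(B^u_n(y,r)) \cdot \mu^s_x(B^s_m(y,r))$ with uniform constants. Since rectangular Bowen balls $[B^u_n(y,r), B^s_m(y,r)]$ for varying $y \in R$ and $n,m \geq 0$ form a neighborhood basis for the Borel $\sigma$-algebra on $R$, a standard Vitali-type differentiation argument produces the uniformly bounded Radon--Nikodym derivative. The main obstacle is the geometric matching between Bowen balls and product rectangles: they never coincide exactly, and one must ensure that all ``$\asymp$'' comparisons hold with constants depending only on the hyperbolicity data and on the Bowen and Gibbs constants $L, K$, but not on $x$, $y$, $n$, or $m$. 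This requires repeated invocation of the Bowen property to absorb geometric mismatches into multiplicative constants on the Gibbs weights, together with the compactness-based uniformity of the local product structure in the Anosov setting.
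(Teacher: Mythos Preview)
Your proposal is correct and follows essentially the same strategy as the paper: reduce to the Gibbs property via \cite{Bow75}, compare two-sided Bowen balls $B_{n,m}$ with bracket-products $[B^u_m, B^s_n]$ of leafwise Bowen balls, derive matching Gibbs-type bounds for $\mu$ on these products and for the marginals $\mu^{u,s}$ on the factors, and combine to obtain $\mu \asymp \mu^u \otimes \mu^s$ on such products. The only divergence is in the passage from this generating family to all Borel sets: where you invoke a Vitali-type differentiation argument, the paper instead chooses maximal $(n,r)$-separated subsets of compact sets $K^{u,s}\subset A^{u,s}$ to produce packing/covering sums, establishes the uniform bound directly on all product sets $[A^u,A^s]$ via inner/outer regularity, and then extends by the monotone class theorem---this sidesteps the need to verify a Vitali covering or doubling hypothesis for the family of Bowen-ball rectangles.
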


It should be emphasized that the main innovation here is not the result itself, which can also be deduced from \cite{CPZ20} (see Remark \ref{rmk:lit}), but rather the simpler method of proof using the following \emph{Gibbs property}, which Bowen proved in \cite{Bow75} for the unique equilibrium measure in Theorem \ref{thm:Holder}.

\begin{definition}\label{def:Gibbs}
We say that a probability measure $\mu$ on $M$ has the \emph{Gibbs property} if there are $P\in \RR$ and $r_0>0$ such that for every $r\in (0,r_0]$, there is a constant $K=K(r)>0$ such that
\begin{equation}\label{eqn:gibbs}
K^{-1} \leq \frac{\mu(B_n(x,r))}{e^{S_n\ph(x) - nP}} \leq K
\quad\text{ for all } x,n.
\end{equation}
\end{definition}
 
The rest of this paper is devoted to the proof of the following, which (together with \cite{Bow75}) immediately implies Theorem \ref{thm:Holder}.

\begin{theorem}\label{thm:main}
Let $f\colon M\to M$ be a $C^1$ Anosov diffeomorphism, $\ph\colon M\to \RR$ a continuous function, and $\mu$ an $f$-invariant Borel probability measure on $M$ satisfying the Gibbs property \eqref{eqn:gibbs} for some $P\in \RR$.  Then $\mu$ has local product structure with uniformly bounded densities in the sense of Definition \ref{def:lps}.
\end{theorem}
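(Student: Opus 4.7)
The plan is to leverage the Gibbs property in both forward and backward time to obtain precise estimates on the $\mu$-measure of forward, backward, and two-sided Bowen sets, and then to compute the measure of any sub-rectangle of a small rectangle by summing over a compatible pair of $(n,r)$-separated grids on the local unstable and stable leaves. Fix $r > 0$ small enough that the local product structure of the Anosov system holds, and set $R_x := [W^u_r(x), W^s_r(x)]$, where $[\cdot, \cdot]$ is the bracket of the local product structure. For $A \subset W^u_r(x)$ and $B \subset W^s_r(x)$, write $[A, B] := \{[y, z] : y \in A,\ z \in B\}$, and introduce the leafwise mass functionals
\[
\mu^u_x(A) := \mu([A, W^s_r(x)]), \qquad \mu^s_x(B) := \mu([W^u_r(x), B]).
\]
The goal is to prove $\mu([A, B]) \asymp \mu^u_x(A) \cdot \mu^s_x(B) / \mu(R_x)$ with constants depending only on $r$; this is local product structure with uniformly bounded densities.

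The first step is to extract from \eqref{eqn:gibbs} a backward Gibbs estimate and a ``Bowen-along-leaves'' property. Using $f$-invariance, $B_{-n}(y, r)$ relates to a forward Bowen ball via $B_{-n}(y, r) = f^{n-1}\bigl(B_n(f^{-(n-1)}y, r)\bigr)$, giving $\mu(B_{-n}(y, r)) \asymp e^{S^-_n\ph(y) - nP}$, where $S^-_n\ph(y) := \sum_{k=1}^n \ph(f^{-k}y)$. Separately, for $y \in W^s_{\mathrm{loc}}(x)$ sufficiently close to $x$, uniform stable contraction yields $B_n(x, r/2) \subset B_n(y, r)$ for all $n$, so \eqref{eqn:gibbs} gives the uniform bound $|S_n\ph(x) - S_n\ph(y)| \le C$; a chaining argument extends this to all $y \in W^s_r(x)$, and a symmetric backward argument yields $|S^-_n\ph(x) - S^-_n\ph(y)| \le C$ for $y \in W^u_r(x)$. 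The central technical estimate is
\[
\mu\bigl(B_n(x, r) \cap B_{-n}(x, r)\bigr) \asymp e^{S_n\ph(x) + S^-_n\ph(x) - 2nP},
\]
which follows by checking that $f^n\bigl(B_n(x, r) \cap B_{-n}(x, r)\bigr)$ coincides, up to bounded index shifts, with $B_{-2n}(f^n x, r)$, and then applying $f$-invariance and backward Gibbs (noting $S^-_{2n}\ph(f^n x) = S_n\ph(x) + S^-_n\ph(x)$).

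With these tools in place, fix $A \subset W^u_r(x)$ and $B \subset W^s_r(x)$, and choose maximal $(n, r)$-separated sets $\mathcal{A}_n \subset A$ for forward dynamics and $\mathcal{B}_n \subset B$ for backward dynamics, so that both sets have spacing of order $\lambda^{-n}$ on their respective leaves (here $\lambda$ denotes the hyperbolicity rate). Form the grid $x_{ij} := [z_i, s_j]$ for $z_i \in \mathcal{A}_n$, $s_j \in \mathcal{B}_n$. The forward balls $\{B_n(z_i, r)\}_i$ cover the stable-$r$-tube $[A, W^s_r(x)]$ with bounded multiplicity, so $\mu^u_x(A) \asymp \sum_i \mu(B_n(z_i, r)) \asymp \sum_i e^{S_n\ph(z_i) - nP}$ by \eqref{eqn:gibbs}; symmetrically $\mu^s_x(B) \asymp \sum_j e^{S^-_n\ph(s_j) - nP}$. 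The two-sided balls $\{B_n(x_{ij}, r) \cap B_{-n}(x_{ij}, r)\}_{ij}$ cover $[A, B]$ with bounded multiplicity (each is comparable, in local coordinates, to a $\lambda^{-n} \times \lambda^{-n}$ box centered at $x_{ij}$), so by the central estimate
\[
\mu([A, B]) \asymp \sum_{i, j} e^{S_n\ph(x_{ij}) + S^-_n\ph(x_{ij}) - 2nP}.
\]
Since $x_{ij} \in W^s_r(z_i)$ and $x_{ij} \in W^u_r(s_j)$, the Bowen-along-leaves bounds replace $x_{ij}$ by $z_i$ in the forward exponent and by $s_j$ in the backward exponent at additive cost $O(1)$, so the double sum factors as $\bigl(\sum_i e^{S_n\ph(z_i) - nP}\bigr)\bigl(\sum_j e^{S^-_n\ph(s_j) - nP}\bigr)$. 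This gives $\mu([A, B]) \asymp C \cdot \mu^u_x(A) \cdot \mu^s_x(B)$ for some constant $C$; specializing to $A = W^u_r(x)$, $B = W^s_r(x)$ forces $C \asymp 1/\mu(R_x)$.

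The main obstacle lies in the covering arguments: one must verify bounded multiplicity of three different Bowen-ball covers (forward, backward, and two-sided) and ensure that the forward and backward $(n, r)$-separated grids interact cleanly with the local product structure of $R_x$. This rests on a quantitative description of $B_n(x, r)$, $B_{-n}(x, r)$, and their intersection in local stable/unstable coordinates --- approximate boxes of sizes $r \times r\lambda^{-n}$, $r\lambda^{-n} \times r$, and $r\lambda^{-n} \times r\lambda^{-n}$ respectively --- which is standard from uniform hyperbolicity but requires careful bookkeeping.
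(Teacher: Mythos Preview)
Your overall strategy coincides with the paper's: derive two-sided Gibbs bounds from invariance, pick maximal $(n,r)$-separated sets on each leaf, estimate $\mu([A,B])$, $\mu^u(A)$, $\mu^s(B)$ by sums of Gibbs weights, and factor the double sum using the Bowen property along leaves (which, as you note, follows from the Gibbs property itself). The paper phrases things in terms of leafwise Bowen balls $B_m^u$, $B_n^s$ and their brackets rather than $B_n\cap B_{-n}$ at grid points $x_{ij}$, but these objects are interchangeable up to constants, and your ``central technical estimate'' is exactly the paper's Proposition~4.2.

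There is, however, a genuine gap in your covering step. ``Bounded multiplicity'' of a cover $\{B_i\}$ of a set $X$ gives $\mu(X)\le \sum_i \mu(B_i)\le C\,\mu(\bigcup_i B_i)$, but $\bigcup_i B_i$ can be strictly larger than $X$, so this does \emph{not} yield $\sum_i \mu(B_i)\le C'\mu(X)$. Concretely, the two-sided balls centered at your grid points $x_{ij}\in[A,B]$ will typically protrude from $[A,B]$, so the lower bound $\mu([A,B])\gtrsim \sum_{i,j}(\cdots)$ is not available from bounded multiplicity alone; the same issue arises for $\mu^u(A)$ and $\mu^s(B)$. The paper fixes this with an inner/outer approximation: take compact $K^{u,s}\subset A^{u,s}$ and relatively open $G^{u,s}\supset A^{u,s}$, choose $n$ so large that all the relevant Bowen balls around points of $K^{u,s}$ are contained in $G^{u,s}$, use disjoint $r/2$-balls to lower-bound $\mu([G^u,G^s])$, $\mu^u(G^u)$, $\mu^s(G^s)$ and covering $r$-balls to upper-bound $\mu([K^u,K^s])$, $\mu^u(K^u)$, $\mu^s(K^s)$, and then pass to $A^{u,s}$ by inner/outer regularity. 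This sandwich replaces your bounded-multiplicity claim and is what actually delivers both inequalities.

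One further step is missing: the argument above only controls $\mu(E)$ for product sets $E=[A^u,A^s]$. To get the conclusion for all Borel $E\subset R$ (as required by Definition~\ref{def:lps}), you still need to extend via the monotone class theorem, observing that the family of $E$ satisfying the two-sided bound is a monotone class containing the algebra generated by product sets.
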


Theorem \ref{thm:main} does not explicitly state the Bowen property as an assumption, but it should be pointed out that the Gibbs property assumed in \eqref{eqn:gibbs} can only hold if $\ph$ satisfies the Bowen property; see Lemma \ref{lem:Bow-prop}. 

\begin{remark}
There should be no obstruction to proving Theorem \ref{thm:main} for Axiom A diffeomorphisms or locally maximal hyperbolic sets, but here we focus on the simplest possible setting in order to minimize notation and complications, and to make the underlying ideas as transparent as possible. Similarly, we do not pursue the natural and important question of what happens if $\mu$ has a weaker version of the Gibbs property; \cite{CCESW} demonstrates that in some such settings one may still expect to get product structure by adapting the ideas developed here.
\end{remark}

\begin{remark}\label{rmk:lit}
Theorem \ref{thm:Holder} can also be deduced from \cite{CPZ20}, which covers a broader class of partially hyperbolic systems; the regularity of the diffeomorphism is not explicitly stated, but $C^1$ suffices. The local product structure considered in \cite[Definition 2.13]{CPZ20} omits the ``uniformly bounded densities'' property in Definition \ref{def:lps} here, but that property can be deduced from the proofs there.

Similar results were also proved in \cite{Lep00,CPZ19}, both of which assume that $f$ is $C^{1+\alpha}$ and $\ph$ is H\"older, although the primary role of the H\"older continuity assumption on $Df$ appears to be the application of the results to the SRB measure (which we are not concerned with here), rather than the product structure property for other equilibrium meaures. H\"older continuity of $\ph$ is used in a more essential way in \cite{Lep00} to apply RPF operator methods, but not in \cite{CPZ19}, which only uses it to deduce the Bowen property.
\end{remark}

\subsection*{Acknowledgments}
The author is grateful to Caleb Dilsavor for pointing out an error in an earlier version of the introduction, and to the anonymous referee for a careful reading and pointing out some issues in the initial arguments.

\section{Local product structure}\label{sec:lps}

In this section we recall basic definitions and results from hyperbolic dynamics; then in \S\S\ref{sec:Bow-ball-prod}--\ref{sec:finish} we prove Theorem \ref{thm:main}.

\subsection{Stable and unstable manifolds}\label{sec:mfds}

Before defining local product structure for $\mu$, we recall some standard definitions and facts; see \cite[Chapter 6]{KH95} for details and proofs.  Since $f$ is Anosov, every $x\in M$ lies on $C^1$ local stable and unstable manifolds $W^{s,u}_x \subset M$ tangent to $E^{s,u}_x$, which are local leaves of continuous $f$-invariant foliations of $M$, and which have the following properties.
\begin{enumerate}
\item There are $C \geq 1$ and  $\lambda \in (0,1)$ such that for all $x\in M$, $y\in W^s_x$, and $n\geq 0$, we have $d(f^n x, f^n y) \leq C\lambda^n d(x,y)$; a similar contraction bound holds going backwards in time when $y\in W^u_x$.
\item There is $\delta>0$ such that if $d(f^n x, f^ny) \leq \delta$ for all $n\geq 0$, then $y\in W^s_x$; similarly for $W^u_x$ with $n\leq 0$.
\item There is $\eps>0$ such that if $d(x,y) \leq \eps$, then $W_x^s \cap W_y^u$ is a single point, which we denote $[x,y]$.  Moreover, there is a constant $Q$ such that $d([x,y],x) \leq Q d(x,y)$ for all $x,y$, and similarly for $d([x,y],y)$.
\end{enumerate}
When $x,y$ are sufficiently close, the bracket can be characterized as follows.

\begin{lemma}\label{lem:bracket-dyn}
If $x,y\in M$ satisfy $d(x,y) < \delta/(CQ)$, then there is exactly one point $z\in M$ such that
\begin{equation}\label{eqn:zxy}
\text{for all $k\geq 0$, we have } d(f^k z, f^k x) < \delta
\text{ and } d(f^{-k} z, f^{-k} y) < \delta,
\end{equation}
and that point is $z=[x,y]$.
\end{lemma}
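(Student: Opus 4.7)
The plan is to verify existence and uniqueness separately, with both halves using nothing more than the three properties of stable/unstable manifolds stated just before the lemma.

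For existence, I will simply show that $z := [x,y]$ satisfies \eqref{eqn:zxy}. By the bracket property (3), $d(z,x) \leq Q d(x,y) < \delta/C$ and similarly $d(z,y) < \delta/C$. Since $z \in W^s_x$, the contraction bound in (1) gives
\[
d(f^k z, f^k x) \leq C\lambda^k d(z,x) < \lambda^k \delta \leq \delta
\]
for every $k \geq 0$. An identical computation using $z \in W^u_y$ and the backward contraction bound gives $d(f^{-k}z, f^{-k}y) < \lambda^k \delta \leq \delta$. So $z=[x,y]$ works.

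For uniqueness, I will argue that any point $z'$ satisfying \eqref{eqn:zxy} must lie in $W^s_x \cap W^u_y$, which by (3) consists of the single point $[x,y]$. Indeed, the forward inequality in \eqref{eqn:zxy} together with property (2) forces $z' \in W^s_x$, while the backward inequality together with the analogous statement for unstable manifolds forces $z' \in W^u_y$.

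The only thing that feels like it could cause friction is the small technical matter of comparing the constants $\delta$, $\eps$, $C$, $Q$ to guarantee that $[x,y]$ is defined in the first place (we need $d(x,y)\leq \eps$), but this is harmless: either $\delta$ was already chosen small enough that $\delta/(CQ) \leq \eps$, or we simply replace $\delta$ by $\min(\delta,\eps)$ at the outset. No other obstacle arises, and the lemma is essentially an immediate repackaging of the three stated properties of the invariant foliations.
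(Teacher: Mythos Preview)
Your proof is correct and follows exactly the paper's approach: uniqueness comes from property (2) forcing $z\in W_x^s\cap W_y^u=\{[x,y]\}$, and existence from the bound $d([x,y],x),d([x,y],y)\leq Qd(x,y)<\delta/C$ combined with the contraction along leaves. Your version spells out a bit more detail (the factor $\lambda^k$ and the $\delta/(CQ)\leq\eps$ issue), but the argument is the same.
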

\begin{proof}
If the point $z$ satisfies \eqref{eqn:zxy} then $z\in W_x^s \cap W_y^u$, so $z=[x,y]$. Conversely, if $z=[x,y]$, then $d(z,x) \leq Qd(x,y) < \delta/C$ and $d(z,y) \leq Qd(x,y) < \delta/C$, which implies \eqref{eqn:zxy}.
\end{proof}

We will need the following consequence of this.

\begin{lemma}\label{lem:bracket-inv}
Fix $\eps_1 \in (0,\eps]$ such that $\max(C, \|Df\|) Q\eps_1 < \delta$.
If $x,y \in M$ and $n\in \NN$ are such that
$d(f^k x, f^k y) \leq \eps_1$ for all $0\leq k\leq n$, then $f^n([x,y]) = [f^n x, f^n y]$.
\end{lemma}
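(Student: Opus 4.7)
My plan is to prove the statement by induction on $n$, reducing it to the single-step claim that if $d(x', y') \leq \eps_1$ and $d(fx', fy') \leq \eps_1$, then $f[x', y'] = [fx', fy']$. The base case $n = 0$ is trivial, and in the inductive step from $n-1$ to $n$, the hypothesis on the orbit gives both $d(f^{n-1}x, f^{n-1}y) \leq \eps_1$ and $d(f^n x, f^n y) \leq \eps_1$; applying the single-step claim to $x' = f^{n-1}x$ and $y' = f^{n-1}y$ and combining with the inductive hypothesis $f^{n-1}[x,y] = [f^{n-1}x, f^{n-1}y]$ yields the desired equality.

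To prove the single-step claim, let $z' = [x', y']$, which is well-defined because $d(x', y') \leq \eps_1 < \delta/(CQ)$; similarly $[fx', fy']$ is defined. By Lemma \ref{lem:bracket-dyn}, the point $[fx', fy']$ is characterized as the unique $w$ satisfying $d(f^k w, f^{k+1}x') < \delta$ and $d(f^{-k}w, f^{1-k}y') < \delta$ for all $k \geq 0$. I will verify that $w = fz'$ meets these conditions. For the forward inequalities, the fact that $z' \in W^s_{x'}$ together with the contraction estimate in property (1) yields $d(f^{k+1}z', f^{k+1}x') \leq C\lambda^{k+1}d(z', x') \leq CQ\eps_1 < \delta$ for every $k \geq 0$. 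For the backward inequalities with $k \geq 1$, since $z' \in W^u_{y'}$, the analogous backward contraction gives $d(f^{1-k}z', f^{1-k}y') = d(f^{-(k-1)}z', f^{-(k-1)}y') \leq C\lambda^{k-1}d(z', y') \leq CQ\eps_1 < \delta$.

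The delicate case, which I expect to be the main obstacle, is the backward inequality at $k = 0$: it requires $d(fz', fy') < \delta$, and no stable or unstable estimate applies directly because we are iterating forward by a single step while measuring distance to $y'$. My plan is to use instead the crude Lipschitz bound $d(fz', fy') \leq \|Df\|\, d(z', y') \leq \|Df\| Q d(x', y') \leq \|Df\| Q \eps_1$, which is strictly less than $\delta$ precisely by the hypothesis $\max(C, \|Df\|) Q \eps_1 < \delta$. This is exactly the reason the lemma requires $\|Df\|$ in addition to $C$ in the bound on $\eps_1$, and it is the only place in the argument where a purely metric (rather than dynamical) estimate is needed.
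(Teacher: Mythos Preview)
Your proof is correct and follows essentially the same approach as the paper's: induction reducing to the $n=1$ case, then verifying that $f[x',y']$ satisfies the characterization of $[fx',fy']$ from Lemma~\ref{lem:bracket-dyn}, with the $\|Df\|$ bound used precisely for the $k=0$ backward estimate. The only cosmetic difference is that you invoke the contraction rates $C\lambda^n$ directly for the forward and $k\geq 1$ backward estimates, whereas the paper cites Lemma~\ref{lem:bracket-dyn} for those bounds.
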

\begin{proof}
It suffices to prove this in the case $n=1$; the general case then follows by induction. If $d(x,y) \leq \eps_1$, then by Lemma \ref{lem:bracket-dyn}, $z=f[x,y]$ satisfies
\[
\text{for all $k\geq 0$, $d(f^k z, f^k(fx)) = d(f^{k+1}[x,y], f^{k+1}x) < \delta$},
\]
and similarly,
\[
\text{for all $k \geq 1$, $d(f^{-k} z, f^{-k}(fy)) = d(f^{-(k-1)}[x,y], f^{-(k-1)} y) < \delta$}.
\]
Finally, $d(z, fy) = d(f[x,y], fy) \leq \|Df\| d([x,y], y) \leq \|Df\| Q \eps_1 < \delta$, so by Lemma \ref{lem:bracket-dyn}, we have $f[x,y] = [fx, fy]$.
\end{proof}

Let $\Delta_\eps := \{(x,y) \in M\times M : d(x,y) \leq \eps\}$ be the set of pairs where the bracket is guaranteed to be defined.
By continuity of the stable and unstable foliations, the bracket map $\Delta_\eps \to M$ is continuous, and since $\Delta_\eps$ is compact, it is uniformly continuous. We record this in the following form.

\begin{lemma}\label{lem:bracket-cts}
There is a function $\omega \colon (0,\infty) \to (0,\infty)$ such that 
$\lim_{r\to 0^+} \omega(r) = 0$, 
with the property that
if $(x,y), (x',y') \in \Delta_\eps$ 
are such that $d(x,x') <r$ and $d(y,y')<r$, 
then $d([x,y], [x',y']) < \omega(r)$. (See Figure \ref{fig:brackets}(a).)
\end{lemma}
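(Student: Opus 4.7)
The plan is essentially to extract uniform continuity from compactness, so the argument should be short. First, I would observe that $\Delta_\eps$ is a closed subset of the compact metric space $M \times M$, hence compact. The claim then reduces to showing that the bracket map $[\cdot,\cdot] \colon \Delta_\eps \to M$ is continuous, after which uniform continuity follows automatically from the Heine--Cantor theorem, and $\omega$ can be defined as the modulus of continuity
\[
\omega(r) := \sup \{ d([x,y],[x',y']) : (x,y),(x',y') \in \Delta_\eps,\ d(x,x')<r,\ d(y,y')<r \}.
\]
Because $[\cdot,\cdot]$ is uniformly continuous, $\omega(r) \to 0$ as $r \to 0^+$, which is exactly what the lemma asserts.

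To establish the continuity of the bracket, I would appeal to the fact, already invoked from \cite[Chapter 6]{KH95} at the start of \S\ref{sec:mfds}, that the stable and unstable foliations of an Anosov diffeomorphism are continuous. More concretely, the local stable leaf $W^s_x$ and local unstable leaf $W^u_y$ vary continuously in the $C^0$ topology as $x$ and $y$ vary, so if $(x_n,y_n) \to (x,y)$ in $\Delta_\eps$ then $W^s_{x_n}$ and $W^u_{y_n}$ converge uniformly on compact sets to $W^s_x$ and $W^u_y$ respectively. Since $\eps$ was chosen in \S\ref{sec:mfds} so that these two leaves meet in the single point $[x,y]$ transversally, the intersection points $[x_n,y_n] = W^s_{x_n} \cap W^u_{y_n}$ must converge to $[x,y]$; this gives continuity of the bracket on $\Delta_\eps$.

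There is no genuine obstacle here beyond invoking the right black-box from \cite{KH95}. The only slightly delicate point is that one wants the \emph{same} modulus $\omega$ to work for all pairs simultaneously, but this is precisely what compactness of $\Delta_\eps$ provides via Heine--Cantor; there is no need to make $\omega$ explicit (it is not required to be e.g.\ H\"older, which would use further regularity of the foliations). Thus the entire proof of Lemma \ref{lem:bracket-cts} is a one-line appeal to compactness plus continuity of the bracket, with the only substantive ingredient being continuity of the invariant foliations.
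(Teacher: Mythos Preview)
Your proposal is correct and matches the paper's own argument almost verbatim: the paper simply observes, in the sentence immediately preceding the lemma, that continuity of the stable and unstable foliations makes the bracket map continuous, and compactness of $\Delta_\eps$ upgrades this to uniform continuity, which is then ``recorded'' as Lemma~\ref{lem:bracket-cts}. The only cosmetic wrinkle is that your definition of $\omega(r)$ as a supremum yields $d([x,y],[x',y']) \leq \omega(r)$ rather than the strict inequality in the statement; this is harmless (e.g.\ replace $\omega$ by $2\omega$, or note that strictness is never actually used later).
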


For future reference, we observe that the definition immediately gives
\begin{equation}\label{eqn:bracket-twice}
[[x,y], [x',y']] = [x,y']
\end{equation}
whenever all the brackets involved are defined; see Figure \ref{fig:brackets}(a). 

\begin{figure}[tbp]
\quad
\begin{tikzpicture}[baseline, scale=.6, bend angle=8, inner sep=1pt, label distance=1pt,
					arr/.style={<-,draw=gray,shorten <=2pt, >=stealth},
					dot/.style={fill=blue,circle,minimum size=3pt}]
\coordinate (y'W) at (-2,1);
\coordinate (y'C) at (0,1);
\coordinate (y'E) at (2,1);
\path [name path=Wy'u, draw]
	(y'W)  node[black!70,below left]{$W_{y'}^u$} to [bend left] (y'C) to [bend right] (y'E);
\path [name path=vert'] (-1.2,0)-- ++(0,2);
\path [name intersections={of=Wy'u and vert', by=y'}];
\node[dot] at (y') [label=-60:$y'$] {};

\coordinate (yW) at (-2,1.3);
\coordinate (yC) at (0,1.6);
\coordinate (yE) at (2,1.9);
\path [name path=Wyu, draw]
	(yW) node[black!70,above left]{$W_{y}^u$} to [bend left] (yC) to [bend right] (yE);
\path [name path=vert] (-1.5,0)-- ++(0,2);
\path [name intersections={of=Wyu and vert, by=y}];
\node[dot] at (y) [label=above:$y$] {};

\coordinate (xS) at (0,-2.4);
\coordinate (xC) at (.8,0);
\coordinate (xN) at (1.6,2.4);
\path [name path=Wxs, draw]
	(xS)  node[black!70,below right]{$W_{x}^s$} to [bend left] (xC) to [bend right] (xN);
\path [name path=hor] (-1,-1.4)-- ++(2,0);
\path [name intersections={of=Wxs and hor, by=x}];
\node[dot] at (x) [label=right:$x$] {};

\coordinate (x'S) at (-.3,-2.4);
\coordinate (x'C) at (0,0);
\coordinate (x'N) at (0.5,2.4);
\path [name path=Wx's, draw]
	(x'S) node[black!70,below left]{$W_{x'}^s$} to [bend left] (x'C) to [bend right] (x'N);
\path [name path=hor'] (-1,-1.7)-- ++(2,0);
\path [name intersections={of=Wx's and hor', by=x'}];
\node[dot] at (x') [label=left:$x'$] {};

\path [name intersections={of=Wxs and Wyu, by=xy}];
\path [name intersections={of=Wx's and Wy'u, by=x'y'}];
\path [name intersections={of=Wxs and Wy'u, by=xy'}];
\draw[arr] (xy) node[dot]{} to [bend left] (2,2.6) node[above right]{$[x,y]$};
\draw[arr] (x'y') node[dot]{} to [bend right] (-.2,2.6) node[above left]{$[x',y']$};
\draw[arr] (xy') node[dot]{} to (2.3, 1.5) node[right]{$[x,y']$};

\draw[red] (y)--(y');
\draw[red] (x)--(x');
\draw[green!20!black] (xy)--(x'y');

\node[fill=red!30!white,rectangle,inner sep=2pt] (r) at (-2,-1) {If $d< r$};
\node[fill=green!50!white,rectangle,inner sep=2pt] (omega) at (2.6,-.5) {then $d<\omega$};

\draw[arr] ($.5*(y)+.5*(y')$) to [bend right=20](r);
\draw[arr] ($.5*(x)+.5*(x')$) to [bend right=20](r);
\draw[arr] ($.5*(xy)+.5*(x'y')$) to [bend right=20](omega);
\end{tikzpicture}
\hfill
\begin{tikzpicture}[baseline, scale=.6, bend angle=8, inner sep=1pt, 
					node distance=4pt, label distance=0pt, 
					arr/.style={<-,shorten <=2pt, >=stealth, semithick},
					dot/.style={fill=blue,circle,minimum size=3pt}]
\coordinate (N) at (1,2);
\coordinate (E) at (2,0);
\coordinate (q) at (0,0);
\coordinate (S) at ($-1.3*(N)$);
\coordinate (W) at ($-1*(E)$);
\coordinate (NE) at ($(N) + (E)$);
\coordinate (SE) at ($(S) + (E)$);
\coordinate (SW) at ($(S) + (W)$);
\coordinate (NW) at ($(N) + (W)$);

\path [name path=R, fill=yellow!30!white, draw]
	(NW) to [bend left] (N) to [bend right] (NE)
	to [bend left] (E) to [bend right] (SE)
	to [bend left] (S) to [bend right] (SW)
	to [bend left] (W) to [bend right] cycle;
\path [name path=Vqu, draw, line width=1pt] 
	(W) to [bend left] (q) to [bend right] (E);
\path [name path=Vqs, draw, line width=1pt]
	(S) to [bend left] (q) to [bend right] (N);
\node[left=of W] {$V_q^u$};
\node[above=of N] {$V_q^s$};
\node[dot] at (q) [label=-80:$q$] {};

\coordinate (xo) at ($.7*(E)$);
\path [name path=ExN] (xo) -- ++($2*(N)$);
\path [name intersections={of=R and ExN, by=xN}];
\path [name path=ExS] (xo) -- ++($2*(S)$);
\path [name intersections={of=R and ExS, by=xS}];
\path [name path=Ex] ($(xo) + (S)$) -- ++($2*(N)$);
\path [name intersections={of=Vqu and Ex, by=x}];
\path [name path=Vxs, draw]
	(xS) to [bend left] (x) to [bend right] (xN);
\node[above=of xN] {$V_x^s$};
\node[dot] at (x) [label=120:$x$] {};

\coordinate (yo) at ($.7*(N)$);
\path [name path=EyW] (yo) -- ++($2*(W)$);
\path [name intersections={of=R and EyW, by=yW}];
\path [name path=EyE] (yo) -- ++($2*(E)$);
\path [name intersections={of=R and EyE, by=yE}];
\path [name path=Ey] ($(yo) + (W)$) -- ++($2*(E)$);
\path [name intersections={of=Vqs and Ey, by=y}];
\path [name path=Vyu, draw]
	(yW) to [bend left] (y) to [bend right] (yE);
\node[left=of yW] {$V_y^u$};
\node[dot] at (y) [label=-160:{$y$}] {};

\path [name intersections={of=Vxs and Vyu, by=z}];
\draw[arr] (z) node[dot]{} to [bend right] (3.1,1) node[right]{$z = [x,y]$};
\node at (3.1,-0.3)[right] {$x = [z,q] = \pi^u(z)$};
\node at (3.1,-1.3)[right] {$y = [q,z] = \pi^s(z)$};
\end{tikzpicture}
\quad\\[-1cm]
(a)\hspace{5cm}(b)\hspace{2cm}
\caption{Continuity of brackets and structure of rectangles}
\label{fig:brackets}
\end{figure}

\subsection{Rectangles}\label{sec:rect}

A set $R\subset M$ is a \emph{rectangle} if it has diameter $\leq\eps$ and is closed under the bracket operation: in other words, for every $x,y\in R$, the intersection point $[x,y] = W_x^s \cap W_y^u$ exists and is contained in $R$. Figure \ref{fig:brackets}(b) illustrates a rectangle together with the standard procedure for producing rectangles: if we are given $q\in M$, $A\subset W_q^u$, and $B\subset W_q^s$ such that $[x,y]$ is defined for every $x\in A$ and $y\in B$, then we write
\begin{align*}
[A,B] &= \{ [x,y] : x\in A, y \in B \}
= \Big(\bigcup_{x\in A} W_x^s \Big) \cap \Big( \bigcup_{y\in B} W_y^u \Big) \\
&= \{ z \in M : [x,z] \in A, [z,x] \in B \}.
\end{align*}
If $A$ and $B$ are in a small enough neighborhood of $q$, this always produces a rectangle:

\begin{lemma}\label{lem:rect}
If $A \subset W_q^u \cap B(q,\eps/2(Q+1))$ and $B\subset W_q^s \cap B(q,\eps/2(Q+1))$, then $[x,y]$ is defined for every $x\in A$ and $y\in B$, and $[A,B]$ is a rectangle.
\end{lemma}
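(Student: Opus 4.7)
The plan is to use the triangle inequality together with the contraction bound $d([x,y],x) \leq Q d(x,y)$ from property (3) in \S\ref{sec:mfds}, and the algebraic identity \eqref{eqn:bracket-twice}, to verify that $[A,B]$ satisfies the two conditions defining a rectangle.

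First I would check that $[x,y]$ is defined for every $x\in A$ and $y\in B$. By the triangle inequality,
\[
d(x,y) \leq d(x,q) + d(q,y) < \frac{\eps}{Q+1} \leq \eps,
\]
so property (3) guarantees the bracket exists. This shows $[A,B] \subset M$ is well-defined, and also that $[x_1,y_2]$ is defined for every $x_1\in A$ and $y_2\in B$ (which we will need in the last step).

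Next I would verify the diameter bound. For any $z=[x,y]$ with $x\in A$, $y\in B$, the contraction bound gives
\[
d(z,q) \leq d(z,x) + d(x,q) \leq Q\, d(x,y) + d(x,q) < \frac{Q\eps}{Q+1} + \frac{\eps}{2(Q+1)} = \frac{(2Q+1)\eps}{2(Q+1)} < \eps,
\]
so $[A,B] \subset B(q, \eps)$ and in particular has diameter controlled in terms of $\eps$ (possibly after replacing the constant $\eps/2(Q+1)$ in the hypothesis by a slightly smaller one chosen to secure diameter $\leq \eps$ exactly).

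Finally, I would check closure under the bracket. Given $z_1=[x_1,y_1]$ and $z_2=[x_2,y_2]$ in $[A,B]$, the identity \eqref{eqn:bracket-twice} yields $[z_1,z_2]=[x_1,y_2]$, and by construction this point lies in $[A,B]$ since $x_1\in A$ and $y_2\in B$. Its existence was already ensured by the first step.

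The computations are all elementary; the only real subtlety is the constant bookkeeping needed to make the diameter conclusion line up with the definition, and the key conceptual point is that \eqref{eqn:bracket-twice} makes closure under the bracket almost automatic once $[A,B]$ has been shown to be contained in a small neighborhood of $q$.
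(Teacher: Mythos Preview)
Your argument is essentially the paper's: triangle inequality for existence of $[x,y]$, a distance estimate $d([x,y],q)\leq d([x,y],x)+d(x,q)$ for the diameter, and \eqref{eqn:bracket-twice} for closure under the bracket. The paper obtains the sharper bound $d([x,y],q)\leq (Q+1)\,d(x,q)\leq \eps/2$ directly (so no adjustment of constants is needed), whereas you use $Q\,d(x,y)+d(x,q)$ and then hedge; either way the approach is identical.
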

\begin{proof}
Given any $x\in A$ and $y\in B$ we have $d(x,y) < \eps/(Q+1)$, so $[x,y]$ exists, and 
\[
d([x,y],q) \leq d([x,y],x) + d(x,q)  \leq (Q+1)d(x,q) \leq \tfrac\eps2,
\]
so $[A,B]$ has diameter $\leq \eps$. Moreover, given $[x,y], [x',y']\in [A,B]$, where $x,x' \in A$ and $y,y'\in B$, we see from \eqref{eqn:bracket-twice} that $[[x,y],[x',y']] = [x,y'] \in [A,B]$.
\end{proof}

Given a rectangle $R$ and a point $q\in R$, we will write $V_q^u = W_q^u \cap R$ and $V_q^s = W_q^s \cap R$. Observe that $R = [V_q^u, V_q^s]$, so that in fact \emph{every} rectangle can be produced via the bracket operation; they are exactly the sets that exhibit ``product structure'' with respect to the stable and unstable leaves.

\subsection{Measures with local product structure}\label{sec:meas-lps}

Given a rectangle $R$, a point $q\in R$, and Borel measures $\nu^{s,u}$ on $V^{s,u}_q$, let $\nu^u\otimes\nu^s$ be the pushforward of $\nu^s\times \nu^u$ under the bracket map; that is, for every pair of Borel sets $A\subset V_q^u$ and $B\subset V_q^s$, we put $(\nu^u\otimes \nu^s)([A,B]) = \nu^u(A)\nu^s(B)$, and then extend to all Borel sets in $R$ via Carath\'eodory's extension theorem.

\begin{definition}\label{def:lps}
A measure $\mu$ has \emph{local product structure} with respect to $W^{s,u}$ if there is $\eps_1>0$ such that for every $q\in M$ and rectangle $R\ni q$ with $\diam R < \eps_1$, there are measures $\nu^{s,u}$ on $V^{s,u}_q$ such that $\mu|_R \ll \nu^u \otimes \nu^s$.

We say in addition that $\mu$ has \emph{uniformly bounded densities} if there is $\bar K \geq 1$ such that for all $R$, the measures $\nu^{s,u}$ can be chosen so that the Radon--Nikodym derivative $\psi = \frac{d\mu}{d(\nu^u\otimes \nu^s)}$ satisfies $\bar K^{-1} \leq \psi \leq \bar K$ at $\mu$-a.e.\ point.
\end{definition}

\begin{remark}\label{rmk:Holder-psi}
When $f$ is $C^{1+\alpha}$ and $\mu$ is the equilibrium measure for a H\"older continuous potential, one can in fact prove that the Radon--Nikodym derivative $\psi$ from Definition \ref{def:lps} is H\"older, and give an explicit formula. Here we only prove what Call and Park \cite{CP23} refer to as \emph{measurable local product structure}.
\end{remark}

Given a rectangle $R$, let $\pi^{s,u} \colon R \to V_q^{s,u}$ be the projection maps along holonomies, as illustrated in Figure \ref{fig:brackets}(b):
\begin{equation}\label{eqn:pisu}
\pi^s(z) = [q,z] \in V_q^s \cap V_z^u
\quad\text{and}\quad
\pi^u(z) = [z,q] \in V_z^s \cap V_q^u.
\end{equation}
Then consider the following measures on $V_q^{s,u}$:
\begin{equation}\label{eqn:musu}
\mu^s := \pi^s_* \mu|_R,
\qquad
\mu^u := \pi^u_* \mu|_R.
\end{equation}

\begin{lemma}\label{lem:get-leaf-meas}
$\mu$ has local product structure on $R$ if and only if $\mu|_R \ll \mu^u \otimes \mu^s$.
\end{lemma}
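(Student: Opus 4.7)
The plan is to exploit the fact that the bracket provides a natural bijection between $R$ and the Cartesian product $V_q^u \times V_q^s$. The implication $(\Leftarrow)$ is immediate from Definition \ref{def:lps}, simply taking $\nu^{s,u} = \mu^{s,u}$, so all the content is in the forward implication.

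For $(\Rightarrow)$, I would first verify that the bracket map $\theta \colon V_q^u \times V_q^s \to R$ defined by $\theta(x,y) = [x,y]$ is a homeomorphism with inverse $z \mapsto (\pi^u(z), \pi^s(z))$. The required identities $\pi^u([x,y]) = x$ and $\pi^s([x,y]) = y$ follow from \eqref{eqn:bracket-twice} applied with $(x',y')=(q,q)$, together with the observation that $[x,q] = x$ for $x \in V_q^u$ and $[q,y] = y$ for $y \in V_q^s$; the reverse identity $[\pi^u(z),\pi^s(z)] = z$ is another application of \eqref{eqn:bracket-twice}. Pushing forward $\mu|_R$ under $\theta^{-1}$ yields a measure $\tilde\mu := (\pi^u,\pi^s)_*\mu|_R$ on $V_q^u \times V_q^s$ whose marginals are precisely $\mu^u$ and $\mu^s$ from \eqref{eqn:musu}. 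Under $\theta$, the ordinary product measure $\mu^u \times \mu^s$ corresponds to the measure $\mu^u \otimes \mu^s$ defined in \S\ref{sec:meas-lps} (by uniqueness in Carath\'eodory's extension theorem), so the desired absolute continuity $\mu|_R \ll \mu^u \otimes \mu^s$ is equivalent to $\tilde\mu \ll \mu^u \times \mu^s$.

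The remaining step is a purely measure-theoretic lemma: if $\tilde\mu$ on a product space satisfies $\tilde\mu \ll \nu^u \times \nu^s$ for some product measure (which is what the hypothesized local product structure translates into via $\theta$), then $\tilde\mu \ll \mu^u \times \mu^s$ where $\mu^u, \mu^s$ are its own marginals. To prove this I would write $d\tilde\mu = \Psi\, d(\nu^u \times \nu^s)$, so that $\mu^u$ and $\mu^s$ have densities $g^u(x) = \int \Psi(x,y)\,d\nu^s(y)$ and $g^s(y) = \int \Psi(x,y)\,d\nu^u(x)$ with respect to $\nu^u$ and $\nu^s$. A direct application of Fubini shows that $\Psi$ vanishes $(\nu^u \times \nu^s)$-a.e.\ on the set where $g^u(x) = 0$, and similarly where $g^s(y) = 0$. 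Hence if $(\mu^u \times \mu^s)(E) = \int_E g^u(x) g^s(y)\,d(\nu^u \times \nu^s) = 0$, then $g^u(x) g^s(y) = 0$ on $E$ a.e., forcing $\Psi = 0$ on $E$ a.e., so $\tilde\mu(E) = 0$.

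The hard part, such as it is, lies in checking this measure-theoretic bookkeeping carefully; there is no additional dynamical content beyond the identification of $R$ with $V_q^u \times V_q^s$ via the bracket. Measurability of $\theta$ and $\theta^{-1}$, needed to make the pushforwards sensible, is free from continuity of the bracket (Lemma \ref{lem:bracket-cts}) and of the stable/unstable foliations.
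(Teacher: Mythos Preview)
Your proof is correct and follows essentially the same route as the paper: both show via Fubini that the Radon--Nikodym density of $\mu|_R$ with respect to $\nu^u\otimes\nu^s$ vanishes on the portion of $\nu^u\times\nu^s$ not seen by the marginals $\mu^u,\mu^s$. The only cosmetic difference is that the paper phrases this via the Lebesgue decomposition of $\nu^{s,u}$ with respect to $\mu^{s,u}$, whereas you work directly with the marginal densities $g^{s,u}=d\mu^{s,u}/d\nu^{s,u}$; these two descriptions single out the same null sets.
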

\begin{proof}
One direction is immediate from the definition. For the other, we must show that if $\mu|_R \ll \nu^u \otimes \nu^s$ for some measures $\nu^{s,u}$ on $V_q^{s,u}$, then $\mu|_R \ll \mu^u \otimes \mu^s$. Use the Lebesgue decomposition theorem to write $\nu^{s,u} = m^{s,u} + \rho^{s,u}$, where $m^{s,u} \ll \mu^{s,u}$ and $\rho^{s,u} \perp \mu^{s,u}$. 

Let $\psi$ be the Radon--Nikodym derivative of $\mu|_R$ with respect to $\nu^u\otimes\nu^s$.
Since $\rho^u \perp \mu^u$, there exists $E\subset V_q^u$ such that $\mu^u(E) = 0$ (hence $m^u(E) = 0$) and $\rho^u(E^c) = 0$. We have
\[
\mu^u(E) = \mu((\pi^u)^{-1}(E)) = \int_{(\pi^u)^{-1}(E)} \psi \,d(\nu^u \otimes \nu^s)
= \int_E \int_{V_q^s} \psi([y,z]) \,d\nu^s(z) \,d\rho^u(y),
\]
and since $\mu^u(E) = 0$ this implies that $\psi \equiv 0$ $(\rho^u \otimes \nu^s)$-a.e.\ on $(\pi^u)^{-1}(E)$. Since the complement of this set is $(\rho^u \otimes \nu^s)$-null, we conclude that $\psi \equiv 0$ $(\rho^u \otimes \nu^s)$-a.e.\ on $R$. A similar argument applies with the roles of stable and unstable reversed, and we conclude that in the expression
\[
\psi\, d(\nu^u \otimes \nu^s) = \psi\, d(m^u \otimes m^s)
+ \psi\, d(\rho^u \otimes m^s) + \psi \, d(m^u \otimes \rho^s) + \psi \, d(\rho^u \otimes \rho^s),
\]
the last three terms vanish, and thus $\mu \ll m^u \otimes m^s \ll \mu^u \otimes \mu^s$.
\end{proof}

Motivated by Lemma \ref{lem:get-leaf-meas}, we will prove Theorem \ref{thm:main} by proving that every invariant Gibbs measure satisfies $\mu|_R \ll \mu^u \otimes \mu^s$. In fact, we will prove that it has uniformly bounded densities: there exists $\bar{K}\geq 1$ such that for all sufficiently small rectangles $R$ and points $q\in R$, we have
\begin{equation}\label{eqn:unif-bdd}
\bar{K}^{-1} (\mu^u\otimes\mu^s)(E)
\leq \mu(E) \leq \bar{K} (\mu^u\otimes\mu^s)(E)
\text{ for all } E\subset R.
\end{equation}
Our strategy will be as follows.
\begin{enumerate}
\item In \S\ref{sec:Bow-ball-prod}, prove that ``two-sided'' Bowen balls in $R$ can be related to the direct product of one-sided ``leafwise'' Bowen balls $B^u \subset V_q^u$ and $B^s \subset V_q^s$.
\item In \S\ref{sec:quasi-indep}, use this result together with the Gibbs property and invariance of $\mu$ to 
obtain Gibbs bounds on the measures of $B^u$, $B^s$, and $[B^u,B^s]$.
\item In \S\ref{sec:finish}, use these Gibbs bounds to prove the quasi-multiplicativity result \eqref{eqn:unif-bdd} when $E$ is the product of sets in $V_q^{u,s}$, and then extend to arbitrary measurable $E\subset R$.
\end{enumerate}

\section{Leafwise Bowen balls and their products}\label{sec:Bow-ball-prod}

Given $q\in M$ and $r>0$, we will consider the following \emph{leafwise} Bowen balls:
\begin{align*}
B_m^u(q,r) &= \{x\in W_q^u : d(f^kx , f^k q) < r\text{ for all } 0\leq k < m \}, \\
B_n^s(q,r) &= \{y\in W_q^s : d(f^{-k} y, f^{-k}q) < r\text{ for all } 0\leq k < n\}.
\end{align*}
Here we use the metric on $M$ rather than the induced leaf metric on $W_q^u$ and $W_q^s$;
this in particular yields the relationship $B_1^{s,u}(q,r) = B(q,r) \cap W^{s,u}_q$.
(One could also give the proof using the induced leaf metric, making minor changes to various constants.)
We will also need to consider the two-sided Bowen balls
\[
B_{n,m}(q,r) = \{z \in M : d(f^k z, f^k q) < r \text{ for all } -n < k < m \}.
\]
Observe that $B_{1,m}(q,r) = B_m(q,r)$, the usual one-sided Bowen ball in $M$; more generally, we have
\begin{equation}\label{eqn:Bnm-as-image}
B_{n,m}(q,r) = f^{n-1}(B_{m+n-1}(f^{-(n-1)}(q),r)).
\end{equation}
The next proposition uses the quantity $\eps_1$ defined in Lemma \ref{lem:bracket-inv}, and the function $\omega$ defined in Lemma \ref{lem:bracket-cts}.

\begin{proposition}\label{prop:Bow-balls-1}
Fix $r \in (0,\eps_1)$, and let $r_1 \in (0,r)$ be sufficiently small that $\omega(r_1) < r$. 
Suppose $x,y\in M$ are sufficiently close that $CQ d(x,y) + r_1 < \eps_1$.
Then for every $m,n>0$, we have
\begin{equation}\label{eqn:Bnm}
B_{n,m}([x,y],r_1) \subset [B_m^u(x,r), B_n^s(y,r)].
\end{equation}
\end{proposition}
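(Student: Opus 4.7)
My plan is to take any $z \in B_{n,m}([x,y], r_1)$ and produce explicit witnesses $a \in B_m^u(x,r)$ and $b \in B_n^s(y,r)$ with $z = [a,b]$. The natural candidates are $a := [z, x]$ and $b := [y, z]$: then $a \in W_z^s \cap W_x^u$ lies on $W_x^u$, $b \in W_y^s \cap W_z^u$ lies on $W_y^s$, and the double-bracket identity \eqref{eqn:bracket-twice} immediately yields $[a, b] = [[z, x], [y, z]] = [z, z] = z$. Both brackets are defined because $d(z, x) \leq d(z, [x,y]) + d([x,y], x) < r_1 + Q\,d(x, y) < \eps_1$ by hypothesis, and similarly $d(y, z) < \eps_1$.

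The crux is the dynamical estimate $d(f^k a, f^k x) < r$ for $0 \leq k < m$ (the analogous bound for $b$ will follow by a symmetric argument running backward in time). The first step is to establish the uniform orbit bound $d(f^k z, f^k x) < \eps_1$ for $0 \leq k < m$. Since $[x,y] \in W_x^s$, the forward-stable contraction gives $d(f^k[x,y], f^k x) \leq C\lambda^k Q\, d(x, y) \leq CQ\, d(x, y)$, and combining this with the assumed bound $d(f^k z, f^k[x,y]) < r_1$ yields $d(f^k z, f^k x) < r_1 + CQ\, d(x, y) < \eps_1$ via the triangle inequality.

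With this bound in place, Lemma \ref{lem:bracket-inv} applies to give $f^k a = f^k[z, x] = [f^k z, f^k x]$ for $0 \leq k < m$, and because $[x,y] \in W_x^s$ forces $[[x,y], x] = x$, the same lemma also gives $f^k x = [f^k[x,y], f^k x]$. Now Lemma \ref{lem:bracket-cts} (applied with $r_1$ in place of $r$; both pairs lie in $\Delta_\eps$ by the preceding bounds, and the ``second coordinates'' agree identically) yields
\[
d(f^k a, f^k x) = d\bigl([f^k z, f^k x],\, [f^k[x,y], f^k x]\bigr) < \omega(r_1) < r,
\]
which is exactly the forward leafwise Bowen-ball condition. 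The backward-time version for $b$ runs identically, using $[x,y] \in W_y^u$ to extract backward contraction $d(f^{-k}[x,y], f^{-k} y) \leq CQ\, d(x, y)$ and invoking the time-reversed analogues of Lemmas \ref{lem:bracket-inv} and \ref{lem:bracket-cts}, which hold because $f^{-1}$ is also Anosov. The main obstacle, such as it is, lies in bookkeeping: at each iterate one must verify that $(f^k z, f^k x)$ and $(f^k[x,y], f^k x)$ remain inside $\Delta_{\eps_1}$ so that the bracket equivariance and continuity lemmas apply, and this is exactly what the quantitative hypothesis $CQ\, d(x,y) + r_1 < \eps_1$ is designed to guarantee uniformly in $k$.
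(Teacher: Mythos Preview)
Your proof is correct and follows essentially the same route as the paper: you take $a=[z,x]$, $b=[y,z]$, use the stable contraction to get $d(f^k z, f^k x) < CQ\,d(x,y) + r_1 < \eps_1$, apply Lemma~\ref{lem:bracket-inv} to identify $f^k a = [f^k z, f^k x]$, and then use Lemma~\ref{lem:bracket-cts} (comparing with $[f^k[x,y], f^k x] = f^k x$) to conclude $d(f^k a, f^k x) < \omega(r_1) < r$. The paper's argument is identical apart from not explicitly recording $z=[a,b]$ via \eqref{eqn:bracket-twice}, and it similarly dispatches the backward case by symmetry.
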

\begin{proof}
Since $[x,y] \in W_x^s$, for every $k\geq 0$ we have
\begin{equation}\label{eqn:still-on-local}
d(f^k([x,y]), f^k x) \leq C d([x,y], x) \leq CQ d(x,y).
\end{equation}
Now given any $z\in B_{n,m}([x,y],r_1)$, for every $0\leq k < m$ we have
\begin{equation}\label{eqn:r1}
d(f^k z, f^k([x,y])) < r_1;
\end{equation}
see Figure \ref{fig:Bow-balls}(a). Together with \eqref{eqn:still-on-local}, this gives
\[
d(f^k z, f^k x) \leq CQ d(x,y) + r_1 < \eps_1.
\]
Thus we can apply Lemma \ref{lem:bracket-inv} to get $[f^k z, f^k x] = f^k[z,x]$ for all $0\leq k < m$. 
Meanwhile, observing that $f^k[x,y] \in W_{f^k x}^s$, we can use \eqref{eqn:r1} and Lemma \ref{lem:bracket-cts} to get
\[
d([f^k z, f^k x], f^k x) < \omega(r_1) < r.
\]
Putting these together gives $d(f^k[z,x], f^k x) < r$. This holds for all $0\leq k < m$, and since $[z,x] \in W_x^u$, this shows that $[z,x] \in B_m^u(x,r)$. A similar argument for $-n < k \leq 0$, with the roles of stable and unstable reversed, shows that $[q,z] \in B_n^s(y,r)$. Together these prove \eqref{eqn:Bnm}.
\end{proof}

\begin{figure}[tbp]
\begin{tikzpicture}[scale=.8, bend angle=8, inner sep=1pt, label distance=1pt,
					arr/.style={<-,draw=gray,shorten <=2pt, >=stealth},
					dot/.style={fill=blue,circle,minimum size=3pt}]
\coordinate (xo) at (0,0);

\path [name path=Wxu, draw]
	(xo) ++(-1,0) to [bend left] ++(1.5,0) to [bend right] ++(1.5,0);
\path [name path=Wxs, draw]
	(xo) ++(0,-1) to [bend left] ++(0,2) to [bend right] ++(0,2); 
\path [name intersections={of=Wxu and Wxs, by=x}];
\node[dot] at (x) [label=-135:$f^kx$] {};
\path [name path=Wzs, draw]
	(xo) ++(1,-1) to [bend left] ++(-.2,2) to [bend right] ++(-.2,2);
\path [name intersections={of=Wxu and Wzs, by=zx}];
\node[dot] at (zx) [label=-45:{$f^k[z,x]$}] {};
\path [name path=Ly] (-1,1.5)-- ++(4,0);
\path [name path=Lz] (-1,2)-- ++(4,0);
\path [name intersections={of=Wxs and Ly, by=xy}];
\path [name intersections={of=Wzs and Lz, by=z}];
\node[dot] at (xy) [label=left:{$f^k[x,y]$}] {};
\node[dot] at (z) [label=right:$f^kz$] {};

\draw[dotted] (xy)--(z);
\draw[dotted] (x)--(z);

\node[fill=green!50!white,rectangle,inner sep=2pt] (r1) at (-1,2.5) {$d< r_1$};
\node[fill=green!50!white,rectangle,inner sep=2pt] (eps1) at (3,2) {$d<\eps_1$};
\node[fill=green!50!white,rectangle,inner sep=2pt] (r) at (3,.5) {$d< r$};

\draw[arr] ($.5*(xy)+.5*(z)$) to [bend right=20](r1);
\draw[arr] ($.5*(x)+.5*(z)$) to [bend right=20](eps1);
\draw[arr] ($.5*(x)+.5*(zx)$) to [bend left=20](r);
\end{tikzpicture}
\hfill
\begin{tikzpicture}[scale=.6, bend angle=8, inner sep=1pt, label distance=1pt,
					arr/.style={<-,draw=gray,shorten <=2pt, >=stealth},
					dot/.style={fill=blue,circle,minimum size=3pt}]
\coordinate (xo) at (0,0);
\path [name path=Wxu, draw]
	(xo) ++(-1,0) to [bend left] ++(2,0) to [bend right] ++(2,0);
\path [name path=Wxs, draw]
	(xo) ++(0,-1) to [bend left] ++(0,2) to [bend right] ++(0,2); 
\path [name intersections={of=Wxu and Wxs, by=x}];
\node[dot] at (x) [label=-135:$f^kx$] {};
\path [name path=Wzs, draw]
	(xo) ++(.8,-1) to [bend left] ++(.3,2) to [bend right] ++(.3,2);
\path [name intersections={of=Wxu and Wzs, by=zx}];
\node[dot] at (zx) [label=-45:{$f^kx'$}] {};
\path [name path=Wyu, draw] 
	(-1,1.7) to [bend left] ++(2,0) to [bend right] ++(2,0);
\path [name path=Wzu, draw] 
	(-1,2.5) to [bend left] ++(2,0) to [bend right] ++(2,0);
\path [name intersections={of=Wxs and Wyu, by=xy}];
\path [name intersections={of=Wzs and Wzu, by=z}];
\node[dot] at (xy) [label=below left:{$f^k[x,y]$}] {};
\node[dot] at (z) [label=above right:{$f^k[x',y']$}] {};
\path [name intersections={of=Wxs and Wzu, by=xy'}];
\path [name intersections={of=Wzs and Wyu, by=x'y}];
\node[dot] at (xy') [label=above left:{$f^k[x,y']$}] {};
\node[dot] at (x'y) [label=below right:{$f^k[x',y]$}] {};

\draw[dotted] (xy)--(z);
\draw[dotted] (xy')--(zx);

\node[fill=green!50!white,rectangle,inner sep=2pt] (Comega) at (-4.5,2.5) {$d< C\omega(r)$};
\node[fill=green!50!white,rectangle,inner sep=2pt] (eps1) at (5,0.5) {$d<\eps_1$};
\node[fill=green!50!white,rectangle,inner sep=2pt] (r2) at (5,2.5) {$d< r_2$};
\node[fill=green!50!white,rectangle,inner sep=2pt] (r) at (-4,0.2) {$d< r<C\omega(r)$};

\draw[arr] ($.5*(xy)+.5*(xy')$) to [bend left=20](Comega);
\draw[arr] ($.5*(xy')+.5*(zx)$) to [bend right=20](eps1);
\draw[arr] ($.5*(xy)+.5*(z)$) to [bend right=20](r2);
\draw[arr] ($.5*(x)+.5*(zx)$) to [bend right=20](r);
\end{tikzpicture}
\\[1ex]
(a)\hspace{7cm}(b)\quad
\caption{Distance estimates in Propositions \ref{prop:Bow-balls-1} and \ref{prop:Bow-balls-2}}
\label{fig:Bow-balls}
\end{figure}

\begin{proposition}\label{prop:Bow-balls-2}
Fix $r \in (0, \eps_1/(CQ+1))$.
Let $r_2 = \omega(C\omega(r))$. 
Suppose $x,y\in M$ are sufficiently close that $CQ d(x,y)+ (CQ+1)r<\eps_1$.
Then for every $m,n\in \NN$, we have
\begin{equation}\label{eqn:Bmn-sub}
[B_m^u(x,r), B_n^s(y,r)] \subset B_{n,m}([x,y],r_2)
\end{equation}
\end{proposition}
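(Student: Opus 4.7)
The plan is to take an arbitrary $z = [x', y'] \in [B_m^u(x,r), B_n^s(y,r)]$ (with $x' \in B_m^u(x,r)$ and $y' \in B_n^s(y,r)$), set $w = [x,y]$, and show $d(f^k z, f^k w) < r_2$ for all $-n < k < m$. By the symmetry between stable/unstable and forward/backward iteration, it suffices to treat the range $0 \leq k < m$; the case $-n < k \leq 0$ follows by the analogous argument using the auxiliary point $[x', y]$ and unstable contraction under backward iteration in place of the stable contraction used below.

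The central device is the intermediate point $u := [x, y']$, which lies on $W_x^s$ together with $w$ and on $W_{y'}^u$ together with $z$; directly from the bracket definition, $z = [x', u]$. The argument then has four steps. First, Lemma \ref{lem:bracket-cts} gives $d(w, u) < \omega(r)$ from $d(y,y') < r$, with the required membership $(x, y), (x, y') \in \Delta_\eps$ ensured by the hypothesis. Second, since $w, u \in W_x^s$, forward stable contraction (property 1 of \S\ref{sec:mfds}) yields $d(f^k w, f^k u) \leq C\omega(r)$. Third, to propagate the identity $z = [x', u]$, verify $d(f^j x', f^j u) < \eps_1$ for every $0 \leq j \leq k$ by the triangle inequality $d(f^j x', f^j u) \leq d(f^j x', f^j x) + d(f^j x, f^j u) < r + CQ d(x, y')$, where the hypothesis rearranges to $CQ(d(x,y) + r) < \eps_1 - r$ and hence $CQ d(x, y') < \eps_1 - r$; Lemma \ref{lem:bracket-inv} then gives $f^k z = [f^k x', f^k u]$. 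Fourth, the trivial identity $f^k w = [f^k x, f^k w]$ holds (valid because $d(f^k x, f^k w) \leq CQ d(x,y) < \eps_1 \leq \eps$), so applying Lemma \ref{lem:bracket-cts} once more yields
\[
d(f^k w, f^k z) = d\bigl([f^k x, f^k w], [f^k x', f^k u]\bigr) \leq \omega\bigl(\max\{r, C\omega(r)\}\bigr) = \omega(C\omega(r)) = r_2.
\]

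The main obstacle is the third step: checking the hypothesis of Lemma \ref{lem:bracket-inv} uniformly in $j$. The proposition's hypothesis $CQ d(x, y) + (CQ+1)r < \eps_1$ is tuned precisely to give the margin needed here. A conceptually important feature is that $f^k z$ and $f^k u$ may drift arbitrarily far apart along their common global unstable leaf under forward iteration, so any attempt to bound $d(f^k z, f^k u)$ directly would fail; the argument succeeds instead by propagating the bracket identity $z = [x', u]$ via Lemma \ref{lem:bracket-inv}, which only requires the easier-to-control distance $d(f^j x', f^j u)$ to stay below $\eps_1$.
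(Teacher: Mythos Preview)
Your proof is correct and follows essentially the same approach as the paper's: the same intermediate point $u=[x,y']$, the same use of stable contraction to bound $d(f^k w,f^k u)$, the same verification of the $\eps_1$-hypothesis of Lemma~\ref{lem:bracket-inv} via the triangle inequality through $f^k x$, and the same final application of Lemma~\ref{lem:bracket-cts} to the pair $[f^k x,f^k w]$, $[f^k x',f^k u]$. The only point you leave implicit that the paper states is $r<C\omega(r)$, which follows since taking $x=y$, $x'=y'$ in Lemma~\ref{lem:bracket-cts} forces $\omega(r)\geq r$.
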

\begin{proof}
Fix $x' \in B_m^u(x,r)$ and $y' \in B_n^s(y,r)$. The bracket $[x',y']$ exists because $d(x',y') < d(x,y) + 2r < \eps$. We will prove that $d(f^k[x',y'], f^k[x,y]) < r_2$ for all $0\leq k < m$; the result for $-n<k\leq 0$ will follow by reversing the roles of stable and unstable.

First observe that $d([x,y'],x) \leq Q d(x,y') < Q(d(x,y) + r)$, so for all $k\geq 0$ we have $d(f^k[x,y'], f^k x) < CQ(d(x,y) + r)$. Given $0\leq k < m$, we deduce that
\[
d(f^k[x,y'], f^k x') \leq d(f^k[x,y'], f^k x) + d(f^k x, f^k x')
< CQ(d(x,y) + r) + r < \eps_1; 
\]
see Figure \ref{fig:Bow-balls}(b). We conclude that
\begin{equation}\label{eqn:xyx'y'}
\begin{aligned}
[f^k x', f^k[x,y']] &= f^k [x', [x,y']] = f^k [x', y'],
&&\text{ by Lemma \ref{lem:bracket-inv},} \\
[f^k x, f^k[x,y]] &= f^k[x,y], &&\text{ since } f^k[x,y] \in W_{f^k x}^s.
\end{aligned}
\end{equation}
Observe that $d([x,y'],[x,y]) < \omega(r)$ by Lemma \ref{lem:bracket-cts}, and thus $d(f^k[x,y'], f^k [x,y]) < C\omega(r)$ since these points lie on the same local stable manifold. Since $d(f^k x', f^k x) < r < C\omega(r)$, we can apply Lemma \ref{lem:bracket-cts} again to the points in \eqref{eqn:xyx'y'} and obtain
\[
d(f^k[x,y], f^k[x',y']) < \omega(C\omega(r)) = r_2.
\]
This holds for $0\leq k < m$, and a similar argument with stable and unstable reversed proves it for $-n<k\leq 0$, so we have shown that $[x',y'] \in B_{n,m}([x,y],r_2)$.
\end{proof}

\section{Further Gibbs bounds}\label{sec:quasi-indep}

Our arguments in this section require the following property.

\begin{definition}\label{def:Bow-prop}
Given a compact metric space $M$ and a continuous map $f\colon M\to M$, a function $\ph\colon M\to \RR$ has the \emph{Bowen property at scale $r>0$} if there exists $L\in \RR$ such that for every $x\in M$, $n\in \NN$, and $y\in B_n(x,r)$, we have
\begin{equation}\label{eqn:Bow-prop}
|S_n\ph(x) - S_n\ph(y)| \leq L.
\end{equation}
\end{definition}

As mentioned in the introduction, this property is automatically satisfied whenever $\ph$ has a measure with the Gibbs property \eqref{eqn:gibbs}; note that the following result does not require $f$ to be Anosov.

\begin{lemma}\label{lem:Bow-prop}
Let $M$ be a compact metric space and $f\colon M\to M$ a continuous map. Suppose that $\ph\colon M\to \RR$ is such that there exists a Borel probability measure $\mu$ on $M$ satisfying the Gibbs property \eqref{eqn:gibbs} for some $P\in \RR$ and every $r\in (0, r_0]$. Then $\ph$ has the Bowen property at every scale $r\in (0,r_0)$.
\end{lemma}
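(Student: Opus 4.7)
The plan is to use the Gibbs inequality at two nearby scales to convert a containment of Bowen balls into the Bowen property. Fix $r\in (0,r_0)$ and choose an auxiliary scale $r'' := r_0 - r \in (0,r_0)$, so that both $r''$ and $r_0$ lie in the range $(0,r_0]$ where the Gibbs constants $K(r''), K(r_0)$ are defined by hypothesis.

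First I would establish the key containment: if $y \in B_n(x,r)$, then $B_n(x,r'') \subset B_n(y,r_0)$. This is immediate from the triangle inequality applied in the dynamical metric $d_n$: for $z \in B_n(x,r'')$ and every $0 \leq k < n$, one has $d(f^k z, f^k y) \leq d(f^k z, f^k x) + d(f^k x, f^k y) < r'' + r = r_0$. By monotonicity of $\mu$, this yields $\mu(B_n(x,r'')) \leq \mu(B_n(y,r_0))$.

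Next I would apply the Gibbs property \eqref{eqn:gibbs} to both sides: the lower bound at scale $r''$ gives $\mu(B_n(x,r'')) \geq K(r'')^{-1} e^{S_n\ph(x)-nP}$, while the upper bound at scale $r_0$ gives $\mu(B_n(y,r_0)) \leq K(r_0) e^{S_n\ph(y)-nP}$. Combining these and taking logarithms eliminates the common factor $e^{-nP}$ and yields
\[
S_n\ph(x) - S_n\ph(y) \leq \log\bigl(K(r'') K(r_0)\bigr).
\]
Since $y \in B_n(x,r)$ is equivalent to $x \in B_n(y,r)$, the roles of $x$ and $y$ may be swapped, giving the reverse bound. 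Hence \eqref{eqn:Bow-prop} holds with $L = \log(K(r'') K(r_0))$.

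I do not anticipate any real obstacle: this is a direct symmetry-plus-containment argument, and the only care needed is to ensure that both auxiliary scales $r''$ and $r_0$ remain in the range where the Gibbs property is assumed, which is why the statement requires $r < r_0$ strictly rather than $r \leq r_0$.
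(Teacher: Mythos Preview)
Your proof is correct and follows essentially the same approach as the paper: both use the containment $B_n(x,\zeta)\subset B_n(y,r+\zeta)$ for $y\in B_n(x,r)$ together with the Gibbs bounds at the two scales $\zeta$ and $r+\zeta$, then conclude by symmetry. The only cosmetic difference is that the paper allows any $\zeta\in(0,r_0-r]$ while you take the specific (maximal) choice $\zeta=r''=r_0-r$, which makes the larger scale equal to $r_0$; this changes nothing in the argument.
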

\begin{proof}
Fix $\zeta \in (0, r_0 - r]$, so that $\mu$ has the Gibbs property at scale $\zeta$ and at scale $r+\zeta$.
Given $x\in M$ and $y\in B_n(x,r)$, we have 
\begin{align*}
\mu(B_n(y,r+\zeta)) &\leq K(r+\zeta) e^{S_n\ph(y) - nP}, \\
\mu(B_n(x,\zeta)) &\geq K(\zeta)^{-1} e^{S_n\ph(x) - nP}.
\end{align*}
Moreover, $B_n(x,\zeta) \subset B_n(y, r+\zeta)$, so this gives
\[
K(\zeta)^{-1} e^{S_n\ph(x) - nP} \leq K(r+\zeta) e^{S_n\ph(y) - nP},
\]
from which we deduce that $S_n\ph(x) - S_n\ph(y) \leq \log(K(\zeta)K(r+\zeta))$, and by symmetry we obtain the result.
\end{proof}

Returning to the Anosov setting, we can use the Bowen property to deduce Gibbs bounds involving leafwise Bowen balls. In what follows, we will use the notation
\begin{equation}\label{eqn:Sn-}
S_n^-\ph(y) = \sum_{k=0}^{n-1} \ph(f^{-k} y),
\end{equation}
and will write two-sided bounds of the form $K^{-1} C \leq A \leq KC$ as $A = K^{\pm1} C$.

\begin{proposition}\label{prop:us-Gibbs}
Let $f$ be $C^1$ Anosov and let $\mu$ be an $f$-invariant Borel probability measure satisfying the Gibbs property \eqref{eqn:gibbs}.  Let $r \in (0, \eps_1/(CQ+1))$ be sufficiently small that $\omega(C\omega(r)) < r_0$. 
Then there exists $K_0=K_0(r)>0$ such that given any $x,y\in M$ sufficiently close that $CQd(x,y) + (CQ+1)r<\eps_1$, and any $m,n\in\NN$, we have
\begin{equation}
\label{eqn:us-Gibbs}
\mu([B_m^u(x,r),B_n^s(y,r)])
= K_0^{\pm 1} e^{-mP + S_m\ph(x)} e^{-nP + S_n^-\ph(y)}.
\end{equation}
\end{proposition}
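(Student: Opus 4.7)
The plan is to sandwich $[B_m^u(x,r), B_n^s(y,r)]$ between two-sided Bowen balls around $q := [x,y]$, convert those into one-sided Bowen balls via $f$-invariance and \eqref{eqn:Bnm-as-image}, apply the Gibbs property at the resulting scales, and finally move the ergodic sums at $q$ onto $x$ (in the forward direction) and $y$ (in the backward direction) using the Bowen property supplied by Lemma \ref{lem:Bow-prop}.

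Concretely, first choose $r_1 \in (0,r_0]$ small enough that $\omega(r_1) < r$. Propositions \ref{prop:Bow-balls-1} and \ref{prop:Bow-balls-2} combine to yield
\[
B_{n,m}(q, r_1) \subset [B_m^u(x,r), B_n^s(y,r)] \subset B_{n,m}(q, r_2),
\]
where $r_2 = \omega(C\omega(r)) < r_0$ by hypothesis. Monotonicity of $\mu$ reduces the task to estimating $\mu(B_{n,m}(q,r'))$ for $r'\in\{r_1,r_2\}$ from above and below. By \eqref{eqn:Bnm-as-image} and $f$-invariance,
\[
\mu(B_{n,m}(q,r')) = \mu(B_{m+n-1}(f^{-(n-1)}q, r')),
\]
and the Gibbs property at scale $r'$ then yields
\[
\mu(B_{n,m}(q,r')) = K(r')^{\pm 1}\exp\bigl(S_{m+n-1}\ph(f^{-(n-1)}q) - (m+n-1)P\bigr).
\]
A direct reindexing gives $S_{m+n-1}\ph(f^{-(n-1)}q) = S_m\ph(q) + S_n^-\ph(q) - \ph(q)$, so up to the uniformly bounded factor $e^{P - \ph(q)}$, the exponent splits as $(S_m\ph(q) - mP) + (S_n^-\ph(q) - nP)$.

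It remains to replace $q$ by $x$ inside $S_m\ph$ and by $y$ inside $S_n^-\ph$. For the forward step, since $q\in W_x^s$, stable contraction (Section \ref{sec:mfds}) gives $d(f^k q, f^k x) \leq CQd(x,y)$ for all $k\geq 0$, so $x \in B_m(q, CQd(x,y))$; for $x,y$ close enough this scale is below $r_0$, and Lemma \ref{lem:Bow-prop} delivers $|S_m\ph(x) - S_m\ph(q)| \leq L$. For the backward step, I would rewrite $S_n^-\ph(z) = S_n\ph(f^{-(n-1)}z)$ and observe that $q\in W_y^u$ implies $d(f^{-\ell}q, f^{-\ell}y) \leq CQd(x,y)$ for all $\ell \geq 0$; reindexing $\ell = n-1-j$ with $0\leq j \leq n-1$ shows that $f^{-(n-1)}y$ lies in a forward Bowen ball of length $n$ around $f^{-(n-1)}q$ at scale $CQd(x,y)$, so the Bowen property again gives $|S_n^-\ph(y) - S_n^-\ph(q)| \leq L$. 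Absorbing $K(r_1)$, $K(r_2)$, $e^{|P| + \|\ph\|_\infty}$, and $e^L$ into a single $K_0 = K_0(r)$ then yields \eqref{eqn:us-Gibbs}.

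The step I expect to be slightly delicate is the backward Bowen-property application: Definition \ref{def:Bow-prop} is phrased only for forward orbits, so one must express $S_n^-$ as a forward ergodic sum based at $f^{-(n-1)}(\cdot)$ and verify that the two basepoints stay within a scale below $r_0$ for the required $n$ forward iterates. The rest is bookkeeping --- gathering enough smallness on $r$ and on $d(x,y)$ so that every Gibbs and every Bowen invocation is legal, and collapsing the multiplicative constants into a single $K_0(r)$.
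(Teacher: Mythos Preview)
Your proposal is correct and follows essentially the same route as the paper: sandwich via Propositions \ref{prop:Bow-balls-1} and \ref{prop:Bow-balls-2}, convert the two-sided Bowen ball to a one-sided one using \eqref{eqn:Bnm-as-image} and invariance, apply the Gibbs bound, split the ergodic sum, and then shift $S_m\ph$ and $S_n^-\ph$ from $q=[x,y]$ to $x$ and $y$ via the Bowen property. One small bookkeeping point: for Proposition \ref{prop:Bow-balls-1} you need $r_1 < r$ (not just $r_1 \leq r_0$), and for the Bowen-property step you are implicitly using that the constant $L$ can be taken uniform over all scales below a fixed threshold, which is automatic once you fix any scale in $(0,r_0)$ and use monotonicity of Bowen balls in the radius.
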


\begin{proof}
Let $r_2 = \omega(C\omega(r))$.
Using Proposition \ref{prop:Bow-balls-2} and invariance of $\mu$, we have
\begin{equation}\label{eqn:mu-Bnm}
\begin{aligned}
\mu([B_m^u(x,r),&B_n^s(y,r)])
\leq \mu(B_{n,m}([x,y],r_2)) \\
&= \mu(B_{m+n-1}(f^{-(n-1)}([x,y]),r_2)) &&\text{by \eqref{eqn:Bnm-as-image}} \\
&\leq K(r_2) e^{S_{m+n-1}\ph(f^{-(n-1)}([x,y])) - (m+n-1)P} &&\text{by \eqref{eqn:gibbs}} \\
&= K(r_2) e^P e^{-\ph([x,y])} e^{S_m \ph([x,y]) - mP} e^{S_n^- \ph([x,y]) - nP} \\
&\leq K(r_2) e^{P+\|\ph\|} L(r_2)^2 e^{S_m\ph(x) -mP} e^{S_n^-\ph(y)-nP} &&\text{by \eqref{eqn:Bow-prop}}.
\end{aligned}
\end{equation}
This proves one half of \eqref{eqn:us-Gibbs}. Replacing $r_2$ with $r_1\in (0,r)$ sufficiently small that $\omega(r_1) < r$, the other direction follows similarly using Proposition \ref{prop:Bow-balls-1}.
\end{proof}

\begin{proposition}\label{prop:leaf-Gibbs}
Let $f$ be $C^1$ Anosov and let $\mu$ be an $f$-invariant Borel probability measure satisfying the Gibbs property \eqref{eqn:gibbs}. Fix $0 < \bar{r}_1 \leq \bar{r}_2 < \eps_1/(2CQ+1)$ such that $\omega(C\omega(\bar{r}_2)) < r_0$. Then there exists $K_1 = K_1(\bar{r}_1)$ such that if $R\subset M$ is a rectangle and $q\in R$ a point such that
\begin{equation}\label{eqn:adapted-R}
\begin{aligned}
B_1^u(q,\bar{r}_1) \subset V_q^u &:= R \cap W_q^u \subset B_1^u(q,\bar{r}_2), \\
B_1^s(q,\bar{r}_1) \subset V_q^s &:= R \cap W_q^s \subset B_1^s(q,\bar{r}_2),
\end{aligned}
\end{equation}
then for every $r\in [\bar{r}_1, \bar{r}_2]$, $x\in V_q^u$ and $m\in \NN$ satisfying $B_m^u(x,r) \subset V_q^u$, we have
\begin{equation}\label{eqn:u-Gibbs}
\mu^u(B_m^u(x,r)) = K_1^{\pm 1} e^{-mP + S_m\ph(x)},
\end{equation}
and similarly for every $y\in V_q^s$ and $n\in \NN$ satisfying $B_n^s(y,r) \subset V_q^s$, we have
\begin{equation}\label{eqn:s-Gibbs}
\mu^s(B_n^s(y,r)) = K_1^{\pm 1} e^{-nP + S_n^- \ph(y)},
\end{equation}
where $\mu^{u,s}$ are the measures on $V_q^{u,s}$ defined in \eqref{eqn:musu}.
\end{proposition}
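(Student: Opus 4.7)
\textbf{Proof plan for Proposition \ref{prop:leaf-Gibbs}.} The plan is to reduce the leafwise measure $\mu^u(B_m^u(x,r))$ to a two-sided rectangle measure of the form handled by Proposition \ref{prop:us-Gibbs}, then sandwich $V_q^s$ between two stable Bowen balls of controlled size using the adaptedness hypothesis \eqref{eqn:adapted-R}.

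First I would establish the key identification
\[
(\pi^u)^{-1}(B_m^u(x,r)) = [B_m^u(x,r), V_q^s].
\]
The inclusion $\supset$ follows because for $x' \in B_m^u(x,r) \subset V_q^u$ and $y' \in V_q^s$, identity \eqref{eqn:bracket-twice} gives $\pi^u([x',y']) = [[x',y'],q] = [x',q] = x'$, since $x' \in W_q^u$. The inclusion $\subset$ is immediate: every $z \in R$ factors as $z = [\pi^u(z), \pi^s(z)]$. By definition of $\mu^u$ this yields $\mu^u(B_m^u(x,r)) = \mu([B_m^u(x,r), V_q^s])$.

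Next I would use the hypothesis \eqref{eqn:adapted-R} to bracket the right-hand side:
\[
[B_m^u(x,\bar{r}_1), B_1^s(q,\bar{r}_1)] \subset [B_m^u(x,r), V_q^s] \subset [B_m^u(x,\bar{r}_2), B_1^s(q,\bar{r}_2)],
\]
using that $\bar{r}_1 \leq r \leq \bar{r}_2$. Now apply Proposition \ref{prop:us-Gibbs} twice, once with scale $\bar{r}_1$ and once with scale $\bar{r}_2$, taking $n=1$ and $y = q$ in both cases. The hypothesis of Proposition \ref{prop:us-Gibbs} requires $CQd(x,q) + (CQ+1)\bar{r}_j < \eps_1$; since $x \in V_q^u \subset B_1^u(q,\bar{r}_2)$ we have $d(x,q) < \bar{r}_2$, so the needed inequality reduces to $(2CQ+1)\bar{r}_2 < \eps_1$, which is part of the hypothesis. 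The condition $\omega(C\omega(\bar{r}_2)) < r_0$ in the statement is exactly what allows Proposition \ref{prop:us-Gibbs} to be applied at scale $\bar{r}_2$.

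Combining the two applications gives
\[
\mu^u(B_m^u(x,r)) = K_0(\bar{r}_j)^{\pm 1} e^{-mP + S_m\ph(x)} e^{-P + \ph(q)},
\]
with $j=1$ on the lower side and $j=2$ on the upper side. Since $|\ph(q)| \leq \|\ph\|_\infty$, the factor $e^{-P + \ph(q)}$ can be absorbed into a new constant $K_1$ depending only on $\bar{r}_1$ (with $\bar{r}_2$ implicit through the fixed upper bound $\eps_1/(2CQ+1)$), yielding \eqref{eqn:u-Gibbs}. The proof of \eqref{eqn:s-Gibbs} is identical with the roles of stable and unstable exchanged; the ergodic sum $S_m\ph(x)$ is then replaced by $S_n^-\ph(y)$, matching the form of Proposition \ref{prop:us-Gibbs}. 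The only mildly delicate step is the bracket identity for the preimage under $\pi^u$; everything else is bookkeeping.
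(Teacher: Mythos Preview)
Your proposal is correct and follows essentially the same route as the paper: identify $\mu^u(B_m^u(x,r)) = \mu([B_m^u(x,r),V_q^s])$, sandwich $V_q^s$ between $B_1^s(q,\bar r_1)$ and $B_1^s(q,\bar r_2)$ via \eqref{eqn:adapted-R}, and apply Proposition~\ref{prop:us-Gibbs} at both scales with $n=1$, $y=q$, checking the distance hypothesis via $(2CQ+1)\bar r_2 < \eps_1$. You spell out a couple of points (the bracket identity for $(\pi^u)^{-1}$ and the absorption of $e^{-P+\ph(q)}$) that the paper leaves implicit, but otherwise the arguments coincide.
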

\begin{proof}
By the definition of $\mu^u$ and the assumption that $B_m^u(x,r) \subset V_q^u$, we have
\[
\mu^u(B_m^u(x,r)) = \mu([B_m^u(x,r), V_q^s]).
\]
By \eqref{eqn:adapted-R}, this gives
\[
\mu([B_m^u(x,\bar{r}_1),B_1^s(q,\bar{r}_1)]) \leq \mu^u(B_m^u(x,r))
\leq \mu([B_m^u(x,\bar{r}_2),B_1^s(q,\bar{r}_2)]),
\]
so \eqref{eqn:u-Gibbs} follows from Proposition \ref{prop:us-Gibbs} upon observing that $CQd(x,q) + (CQ+1)r \leq CQ\bar{r}_2 + (CQ+1)\bar{r}_2 < \eps_1$. The proof of \eqref{eqn:s-Gibbs} is similar.
\end{proof}

\section{Completion of the proof}\label{sec:finish}

Now we use Propositions \ref{prop:us-Gibbs} and \ref{prop:leaf-Gibbs} to prove \eqref{eqn:unif-bdd}, which will show Theorem \ref{thm:main}.
Let $r \in (0,\eps_1/(2CQ+1))$ be sufficiently small that $\omega(C\omega(r))<r_0$, and let $K_1 = K_1(r/2)$ be provided by Proposition \ref{prop:leaf-Gibbs} with $\bar{r}_1 = r/2$ and $\bar{r}_2 = r$. Then given $q\in M$, the rectangle $R = [B_1^u(q,r),B_1^s(q,r)]$ satisfies \eqref{eqn:adapted-R}, and thus we can use the bounds \eqref{eqn:u-Gibbs} and \eqref{eqn:s-Gibbs} on the leaf measures $\mu^{u,s}$.

Let $A^u \subset V_q^u$ and $A^s \subset V_q^s$ be any Borel sets. Let $K^{s,u} \subset A^{s,u}$ be arbitrary compact sets, and $G^{s,u} \supset A^{s,u}$ be arbitrary (relatively) open sets in $V_q^{s,u}$. By compactness and uniform hyperbolicity, there exists $n\in \NN$ such that for every $x\in K^u$, we have $B_n^u(x,r) \subset G^u$, and similarly for $K^s, B_n^s, G^s$. Let $E^u \subset K^u$ be a maximal $(n,r)$-separated set; that is, $E^u$ has the property that for every $x\in E^u$, we have $B_n^u(x,r) \cap E^u = \{x\}$, and it is not properly contained in any larger set with this property. It follows that:
\begin{itemize}
\item the sets $\{ B_n^u(x,r/2) : x\in E^u \}$ are disjoint and contained in $G^u$ (if they are not disjoint then $E^u$ is not $(n,r)$-separated, since $B_n^u(x,r/2) \cap B_n^u(y,r/2) \neq \emptyset$ implies $y\in B_n^u(x,r)$);
\item the sets $\{ B_n^u(x,r) : x\in E^u \}$ cover $K^u$ (otherwise maximality would be violated).
\end{itemize}
Choosing $E^s \subset K^s$ similarly, we obtain
\[
[G^u, G^s] \supset \bigsqcup_{(x,y) \in E^u\times E^s} [B_n^u(x,r/2), B_n^s(y,r/2)],
\]
and thus by Proposition \ref{prop:us-Gibbs}, 
\begin{equation}\label{eqn:mu-GuGs}
\mu([G^u,G^s]) \geq \sum_{(x,y) \in E^u\times E^s} K_0(r/2)^{-1} e^{-nP + S_n\ph(x)} e^{-nP + S_n^- \ph(y)}.
\end{equation}
Similarly,
\[
[K^u,K^s] \subset \bigcup_{(x,y) \in E^u\times E^s} [B_n^u(x,r), B_n^s(y,r)]
\]
and thus Proposition \ref{prop:us-Gibbs} gives
\begin{equation}\label{eqn:mu-KuKs}
\mu([K^u,K^s]) \leq \sum_{(x,y) \in E^u\times E^s} K_0(r) e^{-nP + S_n\ph(x)} e^{-nP + S_n^- \ph(y)}.
\end{equation}
On the unstable leaves, we have
\[
\bigsqcup_{x\in E^u} B_n^u(x,r/2) \subset G^u
\quad\text{and}\quad
K^u \subset \bigcup_{x\in E^u} B_n^u(x,r),
\]
which together with Proposition \ref{prop:leaf-Gibbs} gives
\begin{align}\label{eqn:Gu}
\mu^u(G^u) &\geq \sum_{x\in E^u} K_1^{-1} e^{-nP + S_n\ph(x)}, \\
\label{eqn:Ku}
\mu^u(K^u) &\leq \sum_{x\in E^u} K_1 e^{-nP + S_n\ph(x)}.
\end{align}
A similar argument on the stable leaves gives
\begin{align}\label{eqn:Gs}
\mu^s(G^s) &\geq \sum_{x\in E^s} K_1^{-1} e^{-nP + S_n\ph(x)}, \\
\label{eqn:Ks}
\mu^s(K^s) &\leq \sum_{x\in E^s} K_1 e^{-nP + S_n\ph(x)}.
\end{align}
Combining \eqref{eqn:mu-KuKs}, \eqref{eqn:Gu}, and \eqref{eqn:Gs} gives
\[
\mu([K^u, K^s]) \leq K_0(r/2) K_1^2 \mu^u(G^u) \mu^s(G^s).
\]
Taking a supremum over all compact $K^{u,s} \subset A^{u,s}$, and an infimum over all $G^{u,s} \supset A^{u,s}$, we see from regularity of the measures $\mu,\mu^u,\mu^s$ that
\[
\mu([A^u,A^s]) \leq K_0(r/2) K_1^2 \mu^u(A^u) \mu^s(A^s).
\]
A similar argument using \eqref{eqn:mu-GuGs}, \eqref{eqn:Ku}, and \eqref{eqn:Ks} provides the corresponding lower bound, so we have proved that taking $\bar{K} = K_0(r/2) K_1^2$, \eqref{eqn:unif-bdd} holds whenever $R$ satisfies \eqref{eqn:adapted-R} and $E\subset R$ has the form $E = [A^u, A^s]$. The extension to arbitrary Borel sets $E\subset R$ follows a standard argument: let $\AAA$ denote the set of all Borel sets $E\subset R$ such that \eqref{eqn:unif-bdd} holds; let $\mathcal{A}_0$ be the algebra of all finite unions of disjoint measurable sets of the form $[A^u,A^s]$, and observe that $\AAA_0 \subset \AAA$; finally, observe that $\mathcal{A}$ is a monotone class, so by the monotone class theorem it must contain the Borel $\sigma$-algebra.

This proves the result for all rectangles $R$ of the form $R = [B_1^u(q,r), B_1^s(q,r)]$, and since any rectangle of sufficiently small diameter is contained in a rectangle of this form, we conclude that $\mu$ has local product structure in the sense of Definition \ref{def:lps}, which completes the proof of Theorem \ref{thm:main}.

\bibliographystyle{amsalpha}
\bibliography{references}

\end{document}